\newtheorem{theorem}{Theorem}[section]
\newtheorem{lemma}[theorem]{Lemma}
\theoremstyle{definition}
\newtheorem{definition}[theorem]{Definition}
\newtheorem{coro}[theorem]{Corollary}
\theoremstyle{remark}
\newtheorem{remark}[theorem]{Remark}
\numberwithin{equation}{section}
\newcommand{\thmref}[1]{Theorem~\ref{#1}}
\newcommand{\lemref}[1]{Lemma~\ref{#1}}
\newcommand{\secref}[1]{Sect. \ref{#1}}
\newcommand{\etc}{\textit{etc}.}
\newcommand{\ie}{\textit{i.e.}}
\newcommand{\eg}{\textit{e.g.}}
\newcommand{\ud}{\,\mathrm{d}}
\newcommand{\RR}{\mathbb{R}}
\newcommand{\Real}{\mathbb{R}}
\newcommand*{\rom}[1]{\expandafter\@slowromancap\romannumeral #1@}
\newcommand{\vect}[1]{\boldsymbol{#1}}
\newcommand{\wt}[1]{\widetilde{#1}}
\newcommand{\eps}{\epsilon}
\DeclarePairedDelimiterX{\infdiv}[2]{(}{)}{%
  #1\;\delimsize\|\;#2%
}
\newcommand{\Abs}[1]{\left\lvert#1\right\rvert}
\newcommand{\Inner}[2]{\left\langle#1, #2\right\rangle}
\def\bigl{\mathopen\big}
\def\bigr{\mathclose\big}
\newcommand{\eqrefn}[1]{\eqref{#1}}
\newcommand{\id}{\mathbf{I}_d}
\newcommand{\statdist}{p_\infty} %
\newcommand{\fp}{Fokker-Planck}
\newcommand{\ren}{R{\'e}nyi}
\newcommand{\renyi}{\ren{}}
\newcommand{\poin}{Poincar{\'e}}
\newcommand*\Laplace{\Delta}
\newcommand{\divg}{\text{div}}
\newcommand{\mani}{\mathcal{M}}
\newcommand{\tang}{\mathscr{T}}
\newcommand{\muap}{\mu_{\alpha, p}}
\newcommand{\muaptime}[1]{\mu_{\alpha,p_{#1}}}
\newcommand{\gradt}{\text{grad}}
\newcommand{\InnerAlpha}[2]{\left\langle#1, #2\right\rangle_{\alpha, p}}
\newcommand{\rela}[2]{D\infdiv{#1}{#2}}
\newcommand{\relafisheralpha}[3]{\mathscr{I}_{#3}\infdiv{#1}{#2}}
\newcommand{\relafisher}[2]{\relafisheralpha{#1}{#2}{}}
\newcommand{\renyidivg}[2]{D_{\alpha}\infdiv{#1}{#2}}
\newcommand{\renyidivgalpha}[3]{D_{#3}\infdiv{#1}{#2}}
\newcommand{\metrictensor}[3]{g_{\alpha, #3}\left(#1, #2\right)}
\newcommand{\gw}{Gr{\"o}nwall's inequality}
\newcommand{\lsi}{log-Sobolev inequality}
\newcommand{\lb}{Lindblad}
\newcommand{\waittime}{\tau}
\newcommand{\decayfactor}{C}
\newcommand{\ou}{Ornstein-Uhlenbeck}
\newcommand{\bbren}{Benamou-Brenier}
\newcommand\myeq[1]{\mathrel{\stackrel{\makebox[0pt]{\mbox{\normalfont\tiny #1}}}{=}}}
\newcommand\myle[1]{\mathrel{\stackrel{\makebox[0pt]{\mbox{\normalfont\tiny #1}}}{\le}}}
\begin{document}

\title[Exponential decay of R{\'e}nyi divergence]{Exponential decay of
  R{\'e}nyi divergence under Fokker-Planck equations}

 \author[Y. Cao]{Yu Cao}
 \address[Y. Cao]{Department of Mathematics, Duke University, Box
 	90320, Durham NC 27708, USA}
\email{yucao@math.duke.edu}

 \author[J. Lu]{Jianfeng Lu}
 \address[J. Lu]{Department of Mathematics, Department
 	of Physics, and Department of Chemistry, Duke University, Box
 	90320, Durham, NC 27708, USA}
\email{jianfeng@math.duke.edu}

 \author[Y. Lu]{Yulong Lu}
 \address[Y. Lu]{Department of Mathematics, Duke University, Box
 	90320, Durham NC 27708, USA}
\email{yulonglu@math.duke.edu}

\subjclass[2010]{Primary 35B40}

\date{\today}

\begin{abstract}
  We prove the exponential convergence to the equilibrium, quantified
  by \renyi{} divergence, of the solution of the Fokker-Planck
  equation with drift given by the gradient of a strictly convex
  potential. This extends the classical 
   exponential decay result on the relative entropy for the same
  equation.
\end{abstract}

\maketitle

\section{Introduction and main results}
We consider  the long time behavior of the following Fokker-Planck equation on $\RR^d$
\begin{equation}\label{eq:fp}
\begin{split}
& \partial_t p_t(x) = \divg \left(p_t(x) \nabla V(x)  \right) + \Laplace p_t(x),\\
\end{split}
\end{equation}
where $V(x)$ is a smooth potential function on $\RR^d$, and
the initial datum $p_0$ is smooth and decays sufficiently fast at infinity. 
It is well-known \cite{markowich_trend_1999,toscani1999entropy}
that if $V$ satisfies the uniform convexity
(or Bakry-\'Emery \cite{bakry1985diffusions}) condition:
\begin{equation}\label{eq:BE}
D^2 V(x) \geq K \cdot \id \text{ for every } x\in \RR^d
\end{equation}
with some constant $K > 0$, then the solution $p_t$ of the Fokker-Planck
equation dissipates the relative entropy (or Kullback-Leibler
divergence, or free energy functional) exponentially fast towards the Gibbs stationary
distribution
$$
p_\infty(x) =e^{-V(x)},
$$ 
where we assume that the normalization constant is one without loss of generality. More precisely, 
\begin{equation}\label{eq:entropydecay}
\rela{p_t}{p_\infty} \leq  \rela{p_0}{p_\infty} e^{-2K t}, 
\end{equation}
where we recall that the relative entropy $\rela{p}{q}$ is defined by 
\begin{equation*}
\rela{p}{q} := 
\begin{cases}
  \displaystyle \int \frac{p}{q} \ln \left(\frac{p}{q}\right)\ud q,  & p \ll q, \\
  \infty, & \text{otherwise.}
\end{cases}
\end{equation*}
For convenience, we will abuse notation and use symbols $p$, $q$,
\etc, to represent probability measures as well as the density
functions associated with them. Whether a symbol refers to a
probability measure or a density should be clear from the context. In
addition, to avoid technicalities, all probability density functions
under consideration will be assumed to be smooth.

The exponential decay \eqrefn{eq:entropydecay} can be established by
the entropy dissipation method, which usually relies on the validity of
the \lsi{} with respect to $\statdist$. In fact, the entropy production
(time-derivative of entropy) is
\begin{equation}\label{eq:entropyprod}
\frac{\ud}{\ud t} \rela{p_t}{p_\infty} = - \relafisher{p_t}{\statdist},
\end{equation}
where $\relafisher{\cdot}{\cdot}$ is the \emph{relative Fisher information} defined by
\begin{equation}
\label{eqn::relative_Fisher}
\relafisher{p}{q} := \int \Bigl\lvert \nabla \ln \Bigl(\frac{p}{q}\Bigr) \Bigr\rvert^2 \ud p,
\end{equation}
if $p \ll q$ and $q\ll p$; 
otherwise, set $\relafisheralpha{p}{q}{} := \infty$.
We say that the measure $p_\infty = e^{-V}$ 
satisfies  
the \lsi{} (LSI) \cite{bakry2014,gross_hypercontractivity_1975,gross_logarithmic_1975}
with constant $K > 0$  if for all probability measures $p \ll \statdist$, we have
\begin{equation}\label{eq:lsi}
\rela{p}{\statdist} \leq \frac{1}{2K} \relafisher{p}{\statdist}.
\end{equation}
Then  \eqrefn{eq:entropydecay} follows directly  from
\eqrefn{eq:entropyprod}, \eqrefn{eq:lsi} and \gw{}.
By linearizing the LSI near $\statdist$, it yields the \poin{} inequality \cite{Otto00_log_sobolev}: if $\int f\ud \statdist = 0$, then
\begin{equation}
\label{eqn::poin_ineq}
K \int f^2\ud \statdist \le \int \Abs{\nabla f}^2\ud \statdist.
\end{equation}
The entropy dissipation method exemplified as above has become an important tool to  study convergence to equilibrium of 
solutions of evolutionary partial differential equations. 
We refer interested readers
to \cite{villani2003topics,villani2008entropy, villani2008optimal} for more extensive discussion on this method.

There is a huge amount of literature, attempting to generalize the above picture, by considering various semigroup dynamics and entropy measures. 
In \cite{arnold_convex_2001}, Arnold \emph{et al.}~considered more general \fp{} equations and \emph{admissible relative entropies} generated by convex functions $\psi$; more explicitly, the admissible relative entropy considered there has the form 
\begin{equation}
\label{eqn::admissible_relative_entropy}
D_{\psi} \infdiv{p}{q} := \int \psi\left(\frac{p}{q}\right) \ud q.
\end{equation}
It recovers the standard relative entropy by choosing $\psi(x) = x\log(x)$.
It is worthwhile to mention that an important special instance of this family of divergence is the \emph{Tsallis divergence} \cite{tsallis_possible_1988,nielsen_closed-form_2012} with order $\alpha\in [1,2]$, which refers to the choice $\psi_{\alpha}(x) = \frac{x^{\alpha} - 1}{\alpha - 1}$. 
It has been proved in \cite[Theorem 2.16]{arnold_convex_2001} that under certain assumptions, the solution of the \fp{} equation converges to its stationary distribution exponentially fast, quantified by the admissible relative entropy.

The decay of the solution of Fokker-Planck equation in relative
entropy can be viewed from a different, yet deeper perspective.  This
dates back to the celebrated work by Jordan, Kinderlehrer and Otto
\cite{JKO}, in which the Fokker-Planck equation is regarded as the
gradient flow of the relative entropy with respect to the
2-Wasserstein distance in the space of probability measures.  Based on
identifying a Riemannian structure on the Wasserstein space of
probability measures, Otto \cite{Otto01_porous} showed that a large
number of evolution equations could also be viewed as the gradient
flow in the 2-Wasserstein metric for certain energy functionals.
Moreover, from this geometric perspective, the strong geodesic
convexity of the functionals gives rise to a number of functional
inequalities, including the LSI; see \eg, \cite{Otto00_log_sobolev}.
By now, similar results in this direction have been obtained in
various scenarios, such as finite Markov chains \cite{MAAS20112250},
discrete porous medium equations \cite{erbar2014gradient},
quantum Fermionic Fokker-Planck equation \cite{carlen_analog_2014},
and quantum Lindblad equation \cite{CARLEN20171810}, just to name a
few.

\subsection*{Motivation and main results}
In this paper, 
we study the dissipation behavior of the solution of
\fp{} equation
with respect to the \renyi{} divergence \cite{renyi_measures_1961,Erven}, including
the relative entropy as a special instance.
\renyi{} divergence has been widely used in, for instance, coding \cite{harremoes_interpretations_2006}, statistics \cite{shayevitz_renyi_2011,begin_pac-bayesian_2016}, rare events \cite{atar_robust_2015,dupuis_sensitivity_2018}.
The precise definition of \renyi{} divergence
is given as follows.
\begin{definition}[\renyi{} divergence]
	For two probability distributions $p\ll q$, \renyi{} divergence is defined as
	\begin{equation}
	\label{eqn::renyi}
	\renyidivg{p}{q} = \left\{
	\begin{split}
      & \frac{1}{\alpha-1} \ln \left( \int \left(\frac{p}{q}\right)^{\alpha} \ud q\right), &&\qquad 0 < \alpha < \infty,\, \alpha \neq 1;\\
      &\int \frac{p}{q} \ln \left(\frac{p}{q}\right)\ud q, &&\qquad \alpha = 1.
	\end{split}\right.
	\end{equation}
	If $p$ is not absolutely continuous with respect to $q$, simply set $\renyidivg{p}{q} = \infty$.
\end{definition}
With fixed smooth distributions $p, q$ such that $p\ll q$, \renyi{} divergence is continuous with respect to order $\alpha$, in particular, $\renyidivgalpha{p}{q}{1} = \lim_{\alpha\rightarrow 1} \renyidivgalpha{p}{q}{\alpha}$. Both \renyi{} divergence and Tsallis divergence generalize the relative entropy, though in different flavors \cite{masi_step_2005}. 

It is important to notice that \renyi{} divergence \eqref{eqn::renyi} does not fit into the framework of admissible relative entropy \eqref{eqn::admissible_relative_entropy} studied in \cite{arnold_convex_2001}. To the best of our knowledge, \renyi{} divergence has not been used as an entropy measure to study the behavior of \fp{} equations. 
However, there is one related work \cite{carrillo2014renyi} utilizing the \enquote{relative \renyi{} entropy} (slightly different from \renyi{} divergence defined above) to obtain refined long time asymptotics of the solution of the porous medium equation to its Barenblatt profile.

Our interest in \renyi{} divergence is mainly motivated by the recent work on the second
laws of quantum thermodynamics \cite{Brandao}, which states that a
family of free energies -- quantum \renyi{} divergences (including sandwiched \renyi{} divergences \cite{muller-lennert_quantum_2013,wilde_strong_2014} and Petz-\renyi{} divergences \cite{petz_quasi-entropies_1986}), never increases
during state transition at microscopic level. 
One important challenge at the
quantum level is that there is no consensus about how quantum \renyi{} divergence should
be defined due to the noncommutative nature of quantum systems: two 
incompatible definitions of quantum \renyi{} divergence can be found in \cite{muller-lennert_quantum_2013,wilde_strong_2014, petz_quasi-entropies_1986} and the recent progress on more general $(\alpha, z)$-\renyi{} divergence can be found in \cite{audenaert_alpha-z-renyi_2015,carlen_inequalities_2018}.
For sandwiched \renyi{} divergence, 
Frank and Lieb rigorously proved that
for orders $\alpha\ge \frac{1}{2}$,
it is monotonically decreasing under all
completely positive trace preserving (CPTP) maps (\ie, data processing inequality holds) \cite{frank2013monotonicity}.  In particular, this
implies that the sandwiched \renyi{} divergence decreases under \lb{}
equation, which is generally viewed as the quantum analog of
Fokker-Planck equation.  There are some attempts to characterize the
convergence rate for sandwiched \renyi{} divergence under \lb{} equations
\cite{MullerHermes2018sandwichedrenyi}.
Motivated by the increasing attention to quantum \renyi{} divergence, 
we pull ourselves back from (quantum) microscopic dynamics to (classical) macroscopic time-evolution, and
examine the decay rate of the (classical) \renyi{} divergence, defined in
\eqrefn{eqn::renyi}, under the time-evolution of the Fokker-Planck equation \eqref{eq:fp}. The companion paper \cite{nextpaper} examines the convergence of the solution of  primitive Lindblad equations with GNS-detailed balance, quantified by the sandwiched \renyi{} divergence, whose analysis is strongly informed by the present paper.

 The main result of the present paper is the following theorem. 
 \begin{theorem}
	\label{thm::renyi_conv}
	Assume that $V$ satisfies \eqref{eq:BE}.
	Fix $\alpha \in (0, \infty)$ and a smooth initial probability distribution $p_0$ which decays sufficiently fast at infinity. 
	Let $p_t$ be the solution of the \fp{} equation \eqrefn{eq:fp}. Then
	there exists $\waittime \ge 0$ and $\decayfactor > 0$ such that
	\begin{equation}\label{eq:renyidecay}
	\renyidivg{p_t}{ \statdist} \leq \decayfactor \renyidivg{p_0}{ \statdist}e^{-2K t},\qquad \text{ for any } t \ge \waittime,
	\end{equation}
	 where $\waittime$ is given by
	\begin{equation}\label{eq:t}
	\waittime = \left\{
	\begin{split}
	& 0,  & \qquad \alpha\in (0,2]; \\
	& \frac{1}{2K}\ln(\alpha-1) ,  &\qquad  \alpha\in (2,\infty); \\
	\end{split}\right.
	\end{equation}
	and $\decayfactor$ is given by
	\begin{equation*}
	\decayfactor = \left\{
	\begin{split}
	& \frac{\renyidivgalpha{p_0}{\statdist}{1}}{\renyidivg{p_0}{\statdist}}, &\alpha\in(0,1]; \\
	& \frac{e^{\renyidivgalpha{p_0}{\statdist}{2}} - 1}{\renyidivg{p_0}{\statdist}}, &\alpha \in (1,2]; \\
	& (\alpha-1)\frac{e^{\renyidivgalpha{p_0}{\statdist}{2}} - 1 }{\renyidivg{p_0}{\statdist}}, &\alpha\in (2,\infty). \\ 
	\end{split}\right.
	\end{equation*}

\end{theorem}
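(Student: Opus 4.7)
The plan is to introduce the moment-like quantity $F_\alpha(t) := \int (p_t/\statdist)^\alpha \statdist \ud x$, which satisfies $(\alpha - 1)\renyidivg{p_t}{\statdist} = \ln F_\alpha(t)$ for $\alpha \neq 1$, and to control its evolution. Writing $u_t = p_t/\statdist$ and integrating by parts against \eqref{eq:fp}, I would first establish the dissipation identity
\[
\frac{\ud}{\ud t} F_\alpha(t) = -\alpha(\alpha - 1) \int u_t^{\alpha - 2} \Abs{\nabla u_t}^2 \statdist \ud x = -\frac{4(\alpha - 1)}{\alpha} \int \Abs{\nabla (u_t^{\alpha/2})}^2 \statdist \ud x,
\]
and then split the argument into the regimes $\alpha \in (0, 1]$, $\alpha \in (1, 2]$, and $\alpha \in (2, \infty)$.

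The base case $\alpha = 2$ uses the \poin{} inequality \eqref{eqn::poin_ineq} --- a consequence of \eqref{eq:lsi} by linearization --- applied to $f = u_t - 1$ (which has zero mean under $\statdist$); this gives $\int \Abs{\nabla u_t}^2 \statdist \ud x \ge K(F_2(t) - 1)$, and together with the dissipation identity and \gw{} yields $F_2(t) - 1 \le (F_2(0) - 1) e^{-2Kt}$. Combined with the elementary inequality $\ln(1 + x) \le x$, this produces $\renyidivgalpha{p_t}{\statdist}{2} = \ln F_2(t) \le (F_2(0) - 1) e^{-2Kt}$, which is precisely the claim for $\alpha = 2$. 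The intermediate regimes then follow from the monotonicity of $\alpha \mapsto \renyidivgalpha{p}{q}{\alpha}$ (a direct consequence of H\"older's inequality): for $\alpha \in (0, 1]$, the bound $\renyidivg{p_t}{\statdist} \le \renyidivgalpha{p_t}{\statdist}{1}$ reduces the problem to the classical entropy decay \eqref{eq:entropydecay}; for $\alpha \in (1, 2]$, the bound $\renyidivg{p_t}{\statdist} \le \renyidivgalpha{p_t}{\statdist}{2}$ reduces it to the $\alpha = 2$ case just handled.

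The main obstacle is the regime $\alpha \in (2, \infty)$, where monotonicity points the wrong way and a direct dissipation argument does not deliver the rate $2K$ for $\renyidivg{p_t}{\statdist}$. I would instead invoke Gross's hypercontractivity theorem, the genuine analytic consequence of \eqref{eq:lsi} beyond \poin{}: denoting by $P_s$ the \fp{} semigroup acting on densities, so that $u_{t+s} = P_s u_t$, the \lsi{} \eqref{eq:lsi} implies $\Norm{P_s f}_{L^{q(s)}(\statdist)} \le \Norm{f}_{L^2(\statdist)}$ with $q(s) = 1 + e^{2Ks}$. Choosing $s = \waittime = \frac{1}{2K}\ln(\alpha - 1)$ makes $q(\waittime) = \alpha$, and applying the inequality to $f = u_{t - \waittime}$ for any $t \ge \waittime$ yields $F_\alpha(t)^{1/\alpha} \le F_2(t - \waittime)^{1/2}$. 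Taking logarithms, inserting the $\alpha = 2$ decay already established, and using the identity $e^{2K\waittime} = \alpha - 1$ together with the elementary bound $\alpha/2 \le \alpha - 1$ (valid for $\alpha \ge 2$) then produces exactly the claimed estimate with $\decayfactor = (\alpha - 1)(e^{\renyidivgalpha{p_0}{\statdist}{2}} - 1)/\renyidivg{p_0}{\statdist}$.
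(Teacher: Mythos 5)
Your proof is correct and follows essentially the same route as the paper: the \poin{} inequality plus \gw{} for $\alpha=2$, monotonicity of $\alpha\mapsto D_\alpha$ for the range $(0,2)$ (with the sharper LSI-based constant for $\alpha\le 1$), and a hypercontractive time-shift with $\waittime=\frac{1}{2K}\ln(\alpha-1)$ for $\alpha>2$. The only difference is that you invoke Gross's hypercontractivity theorem as a black box, whereas the paper proves the equivalent statement (its comparison Lemma~\ref{lem::renyi_equiv}) directly from the \lsi{} via the standard argument with the time-dependent exponent $\beta_t=1+(\alpha_0-1)e^{2Kt}$ --- which is exactly the computation underlying Gross's theorem, so the two arguments coincide in substance.
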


\begin{remark}
 Observe that when $\alpha = 1$, \thmref{thm::renyi_conv} recovers 
 the classical dissipation estimate  \eqrefn{eq:entropydecay}
 for relative entropy under the \fp{} equation. The quantum analog of \thmref{thm::renyi_conv} for the Lindblad equation can be found in \cite[Theorem 1.8]{nextpaper}.
\end{remark}

The proof of Theorem \ref{thm::renyi_conv} is presented in Section
\ref{sec:proof}. Let us explain here briefly the key strategies in our
proof. First, we prove this theorem for the case $\alpha=2$, by using
the \poin{} inequality \eqref{eqn::poin_ineq}.  Next, for the range
$\alpha\in(0,2)$, the exponential decay follows immediately from the
monotonicity of the \renyi{} divergence with respect to order $\alpha$
(see \lemref{lem:monotone}).  Then, we present a comparison lemma (see
\lemref{lem::renyi_equiv}), which bounds
$\renyidivgalpha{p_{T}}{\statdist}{\alpha_1}$ above by
$\renyidivgalpha{p_0}{\statdist}{\alpha_0}$ for $\alpha_1 > \alpha_0$,
at the expense of marching time $T$.  Finally, this comparison lemma
is used to prove the exponential decay of the \renyi{} divergence with
order $\alpha\in (2,\infty)$.

Before the proof of Theorem \ref{thm::renyi_conv}, we also show in the
next section a new gradient flow structure of the Fokker-Planck
equation based on the \renyi{} divergence, which will facilitate the
proof of Theorem \ref{thm::renyi_conv}. To the best of our knowledge,
this gradient flow structure does not seem to fit into any existing
framework like \cite{Otto01_porous} and has interest in its own right.

\subsection*{Contribution}
We prove the exponential convergence of the solution of the \fp{} equation \eqref{eq:fp} to the Gibbs stationary distribution $\statdist$, quantified by the \renyi{} divergence \eqref{eqn::renyi} in \thmref{thm::renyi_conv}. 
The proof of \thmref{thm::renyi_conv} for \fp{} equation has very similar quantum analog for Lindblad equation (see \cite{nextpaper}), which suggests a possibility of having almost parallel approaches to study \fp{} equation and \lb{} equation. In addition, we show that under certain metric tensor \eqref{eqn::metric_tensor}, the \fp{} equation can be, at least formally, identified as the gradient flow dynamics of the \renyi{} divergence (see \secref{sec:gradflow}), which can be of independent interest.

The rest of this paper is organized as follows.  In
\secref{sec:gradflow}, we first show that the Fokker-Planck equation
can be formally viewed as the gradient flow of \renyi{} divergence under a certain metric
tensor.  \secref{sec:proof} is fully devoted into the proof of Theorem
\ref{thm::renyi_conv}.

\section{\fp{} equation as the gradient flow of \renyi{} divergence}
\label{sec:gradflow}

This section aims to identify the \fp{} equation \eqref{eq:fp} 
as the gradient flow of \renyi{} divergence for any order $\alpha \in (0,\infty)$, with respect to a certain metric tensor \eqrefn{eqn::metric_tensor} 
in the space of probability measures,
which generalizes the well-known fact that \fp{} equation
is the $L^2$-Wasserstein gradient flow of the relative entropy \cite{JKO}.
Interested readers may refer to 
\eg, \cite{ambrosio2008gradient, Otto00_log_sobolev, Otto01_porous} 
for extensive treatment of gradient flows in the space of probability measures. The derivations in this section are formal and follow closely with those  in \cite{Otto01_porous}.

We first define a Riemannian structure on a space
of probability measures under which the
gradient flow of $\renyidivg{\cdot }{\statdist}$ gives the Fokker-Planck equation \eqref{eq:fp}. 
By Riemannian structure, we mean a manifold (denoted by $\mani_{\alpha}$)
 and a metric tensor, denoted by $\metrictensor{\cdot}{\cdot}{p}$, defined on the tangent space $\tang_{p} \mani_{\alpha}$. The dependence
 of the metric tensor on $\alpha$ and $p$ will be clear in the sequel. For a fixed
 Riemannian structure $(\mani_{\alpha}, \metrictensor{\cdot}{\cdot}{p})$,
 the gradient of the energy functional 
$\renyidivg{\cdot }{\statdist}$ at $p\in \mani_\alpha$ is defined  as the element in $\tang_{p}\mani_{\alpha}$, 
denoted by $\gradt{} D_{\alpha}|_{p}$ or simply $\gradt{} D_{\alpha}$ (when no confuse arises for $p$), such that
\begin{equation}
\label{eqn::gradient}
\metrictensor{\gradt{} D_{\alpha}}{\nu}{p}
= \left.\frac{\ud}{\ud \eps} \renyidivg{p + \eps \nu }{ \statdist}
\right\rvert_{\eps = 0} ,\qquad \forall\, \nu \in \tang_{p} \mani_{\alpha}.
\end{equation}
The corresponding gradient flow dynamics (of the \renyi{} divergence) is given by
\begin{equation}
\label{eqn::gradient_flow}
\partial_t p_t = -\gradt D_{\alpha}|_{p_t}.
\end{equation}
Below we specify the space $\mani_{\alpha}$ and define the metric tensor $\metrictensor{\cdot}{\cdot}{p}$.

Let $\mani_{\alpha}$ be the space of smooth
probability distributions, which  have finite \renyi{} divergence with respect to $\statdist$, \ie,
\begin{equation*}
\mani_{\alpha} := \left\{\ p \ll \statdist\ \text{ is smooth } |\ \renyidivg{p}{\statdist} < \infty \right\}.
\end{equation*}
We will not delve into technical details of the differential structure of
the manifold and think of the tangent space $\tang_{p} \mani_{\alpha}$
at $p\in \mani_{\alpha}$ as
\begin{equation*}
  \tang_{p} \mani_{\alpha} = \bigl\{\text{signed functions } \nu \text{ on $\RR^d$ with } \int \nu(x)\ \ud x = 0\bigr\}.
\end{equation*}
For any $\nu \in \tang_{p} \mani_{\alpha}$, let $\Psi_\nu$ be a weak solution
to the equation
\begin{equation}
\label{eqn::nu_Psi}
\nu + \divg(p \nabla \Psi_{\nu}) = 0.
\end{equation}
Namely, for all smooth and compactly supported test functions $f$, we have
\begin{equation*}
\int f\nu\ \ud x  = \int \nabla f \cdot \nabla \Psi_{\nu}\ud p.
\end{equation*}
Note that  $\Psi_{\nu}$ is defined uniquely up to some additive constant.
Then whenever dealing with an element $\nu\in \tang_{p}\mani_{\alpha}$, it is equivalent to consider its associated $\Psi_{\nu}$.

In order to define the metric tensor $\metrictensor{\cdot}{\cdot}{p}$,
we also need to introduce an inner product $\InnerAlpha{\cdot}{\cdot}$
on the space of vector fields. More precisely,
we define, for vector fields $\vect{U} = \begin{pmatrix}
u_1 & u_2 & \cdots & u_d
\end{pmatrix}$ and $\vect{V} = \begin{pmatrix}
v_1 & v_2 & \cdots & v_d
\end{pmatrix}$ where $u_j$ and $v_j$ are functions on $\Real^d$, for all $1\le j \le d$, 
the inner product $\InnerAlpha{\cdot}{\cdot}$ by
\begin{equation}
\label{eqn::InnerAlpha}
\InnerAlpha{\vect{U}}{\vect{V}} := 
\sum_{j=1}^{d} \alpha \int u_j  v_j\ud \muap,
\end{equation}
where $\muap$ is a probability distribution defined by 
\begin{equation}
\muap := \frac{\displaystyle\Bigl(\frac{p}{\statdist}\Bigr)^{\alpha} \statdist }{ 
	\displaystyle \int \Bigl(\frac{p}{\statdist}\Bigr)^{\alpha}\ud \statdist}.
\end{equation}
With this inner product, we define the metric tensor $\metrictensor{\cdot}{\cdot}{p}$ by 
\begin{equation}
\label{eqn::metric_tensor}
\metrictensor{\nu_1}{\nu_2}{p}
:= \InnerAlpha{\nabla \Psi_{\nu_1}}{\nabla \Psi_{\nu_2}},
\end{equation}
for $\nu_k \in \tang_p \mani_{\alpha}$, where $\Psi_{\nu_k}$
are related to $\nu_k$ via \eqref{eqn::nu_Psi}, \ie,
\[\nu_k + \divg(p \nabla \Psi_{\nu_k}) = 0, \qquad k = 1, 2.\]

When $\alpha=1$, it is easy to see that $\muap = p$ and the resulting metric tensor reduces to the one in \cite{Otto01_porous}.

Finally, we check that the Fokker-Planck equation is indeed the  gradient flow of 
$\renyidivg{\cdot }{\statdist}$ with respect to the Riemannian structure defined above.
In fact, by the definition of the gradient \eqref{eqn::gradient}, we have from
direct computations that
\begin{align}
\label{eqn::compute_renyi_divg_deri}
\begin{split}
\metrictensor{\gradt{} D_{\alpha}}{\nu}{p} &= \left.\frac{\ud}{\ud \eps} \renyidivg{p + \eps \nu }{ \statdist} \right\rvert_{\eps = 0} \\
&= \frac{\alpha}{\alpha-1} \frac{ \int \left(\frac{p}{\statdist}\right)^{\alpha-1} \ud \nu}{\int \left(\frac{p}{\statdist}\right)^{\alpha} \ud \statdist } \\
&\myeq{\eqref{eqn::nu_Psi}}  -\frac{\alpha}{\alpha-1} \frac{ \int \left(\frac{p}{\statdist}\right)^{\alpha-1} \divg(p \nabla \Psi_{\nu})\ud x}{\int \left(\frac{p}{\statdist}\right)^{\alpha} \ud \statdist } \\
&= \alpha \frac{\int \left(\frac{p}{\statdist}\right)^{\alpha-1} \nabla \left(\frac{p}{\statdist} \right) \cdot \nabla \Psi_{\nu} \ud \statdist }{\int \left(\frac{p}{\statdist}\right)^{\alpha} \ud \statdist } \\
&= \InnerAlpha{-\nabla \phi}{\nabla \Psi_{\nu}},
\end{split}
\end{align}
where 
\begin{equation}
\label{eqn::sphi}
\phi := -\ln(p/\statdist)= -\ln(p) - V.
\end{equation}
In view of the definition of metric tensor \eqref{eqn::metric_tensor}, we have
\begin{align*}
\gradt{} D_{\alpha} + \divg\left(p \bigl(-\nabla \phi\bigr)\right) = 0. 
\end{align*}
Consequently, the corresponding gradient flow dynamics is
\begin{equation}
\partial_t p_t = -\gradt D_{\alpha}|_{p_t} = -\divg(p_t \nabla \phi_t),
\end{equation}
where $\phi_t = -\ln(p_t/\statdist)$. This  exactly recovers the Fokker-Planck equation in \eqrefn{eq:fp}.

An immediate consequence from gradient flow structure is the monotonicity of the \renyi{} divergence under the \fp{} dynamics, which is summarized in the following corollary.
\begin{coro}
\renyi{} divergence $\renyidivg{p_t}{\statdist}$ is monotonically decreasing with respect to time $t$ if $p_t$ solves the \fp{} equation \eqref{eq:fp}.
\end{coro}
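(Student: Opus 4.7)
The corollary is precisely the standard statement that energy decreases along a gradient flow, so the plan is to run that argument with the Riemannian structure just constructed. Concretely, I would take the definition of the gradient in \eqref{eqn::gradient}, evaluate it on the particular tangent vector $\nu = \partial_t p_t$, and then use the fact that along the flow $\partial_t p_t = -\gradt D_\alpha|_{p_t}$ so that the right-hand side becomes minus a squared norm.

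In more detail, the first step is to differentiate $t \mapsto \renyidivg{p_t}{\statdist}$ and observe, via the very definition \eqref{eqn::gradient}, that
\begin{equation*}
\frac{\ud}{\ud t}\renyidivg{p_t}{\statdist}
= \metrictensor{\gradt D_\alpha|_{p_t}}{\partial_t p_t}{p_t}.
\end{equation*}
This step uses only that $\partial_t p_t$ lies in $\tang_{p_t}\mani_\alpha$ (a mass-zero signed function, which follows from conservation of mass for \eqref{eq:fp}) together with the chain rule; both are formal and transparent. Substituting $\partial_t p_t = -\gradt D_\alpha|_{p_t}$ from \eqref{eqn::gradient_flow} (equivalent to \eqref{eq:fp} by what was just derived) gives
\begin{equation*}
\frac{\ud}{\ud t}\renyidivg{p_t}{\statdist}
= -\metrictensor{\gradt D_\alpha|_{p_t}}{\gradt D_\alpha|_{p_t}}{p_t}.
\end{equation*}

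The final step is to recognize the right-hand side as non-positive. Since $\Psi_{-\gradt D_\alpha|_{p_t}} = \phi_t = -\ln(p_t/\statdist)$ solves \eqref{eqn::nu_Psi} for $\nu = \partial_t p_t$, the definition \eqref{eqn::metric_tensor} of the metric tensor, together with \eqref{eqn::InnerAlpha}, yields
\begin{equation*}
\frac{\ud}{\ud t}\renyidivg{p_t}{\statdist}
= -\InnerAlphaT{\nabla \phi_t}{\nabla \phi_t}{t}
= -\alpha \int \abs{\nabla \phi_t}^2 \ud \muaptime{t} \;\le\; 0,
\end{equation*}
which is the claimed monotonicity. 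I do not anticipate any real obstacle here: the entire identity is a direct transcription of the gradient-flow computation already carried out in \eqref{eqn::compute_renyi_divg_deri}, and no new inequality (such as an LSI or \poin{} inequality) is required. The only minor point to be careful about is interpreting the $\alpha=1$ case, but by the continuity of $\renyidivg{\cdot}{\cdot}$ in $\alpha$ mentioned after \eqref{eqn::renyi} this follows by passing to the limit (and in any event reduces to the classical entropy dissipation identity \eqref{eq:entropyprod}).
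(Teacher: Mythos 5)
Your proposal is correct and is essentially the paper's own argument: both identify $\Psi_{\nu}$ for $\nu=\partial_t p_t$ with $\phi_t=-\ln(p_t/\statdist)$ and plug into the computation \eqref{eqn::compute_renyi_divg_deri} to obtain $\frac{\ud}{\ud t}\renyidivg{p_t}{\statdist}=-\InnerAlphaT{\nabla\phi_t}{\nabla\phi_t}{t}\le 0$. The gradient-flow phrasing is just a repackaging of the same computation, so there is nothing to add.
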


\begin{proof}
Since $p_t$ solves the \fp{} equation, then $\Psi_{\nu}$ in \eqref{eqn::compute_renyi_divg_deri}, in fact, equals $\phi_t \equiv -\ln\left(p_t/\statdist\right)$. Thus, \eqref{eqn::compute_renyi_divg_deri} becomes
\begin{equation}
\label{eqn::deri_renyidivg}
\frac{\ud}{\ud t} \renyidivg{p_t}{\statdist} = -\Inner{\nabla \phi_t}{\nabla \phi_t}_{\alpha, p_t} \le 0.
\end{equation}
\end{proof}

With such a Riemannian structure, it is natural to define $(\alpha,r)$-\emph{Wasserstein distance} via \emph{\bbren{}} formalism \cite{benamou_computational_2000}: 
\begin{equation}
W_{\alpha,r} (p, q) := \inf_{\gamma_{\cdot}: [0,1]\rightarrow \mani_\alpha, \gamma_0 = p, \gamma_1 = q} \left(\int_{0}^{1} \sqrt{\metrictensor{\dot{\gamma}_s}{\dot{\gamma}_s}{\gamma_s}}^r \ud s\right)^{1/r}.
\end{equation}
Interested readers may refer to \cite{villani2008optimal,ambrosio2008gradient,dolbeault_new_2009} for rigorous treatment of Wasserstein distance.
We will not pursue the properties of this distance measure herein and leave it for future research.

\section{Proof of Theorem \ref{thm::renyi_conv}}\label{sec:proof}
We will first introduce a concept called \emph{relative $\alpha$-Fisher information}.
The proof of Theorem \ref{thm::renyi_conv} is divided into 
three following-up subsections according to three regimes of $\alpha$.

\subsection{Relative $\alpha$-Fisher information}
Suppose $p_t$ solves the \fp{} equation \eqref{eq:fp} and let us define
\begin{equation}
\label{eqn::Phi}
\begin{split}
\Phi_t &:= -\ln\left(\frac{\muaptime{t}}{\statdist}\right) \\
&= -\ln \left(\left(\frac{p_t}{\statdist}\right)^{\alpha}\right) + (\alpha-1) \renyidivg{ p_t }{ \statdist} \\
&= \alpha \phi_t + (\alpha-1) \renyidivg{ p_t }{ \statdist }.
\end{split}
\end{equation}
Then the time derivative of \renyi{} divergence is linked to the relative Fisher information, using \eqref{eqn::deri_renyidivg}, \eqref{eqn::InnerAlpha} and \eqref{eqn::Phi},
\begin{align}
\label{eqn::deri_renyi}
\begin{split}
\frac{\ud }{\ud t} \renyidivg{ p_t }{ \statdist } &= - \frac{1}{\alpha} \int  \Abs{\nabla \Phi_t}^2\ud \muaptime{t} \\
&= -\frac{1}{\alpha} \relafisher{\muaptime{t}}{ \statdist} \\
&=: - \relafisheralpha{p_t}{\statdist}{\alpha},\\
\end{split}
\end{align}
where we introduce \emph{relative $\alpha$-Fisher information} $\relafisheralpha{p}{\statdist}{\alpha} := \frac{1}{\alpha} \relafisher{\muap}{\statdist}$. Note that the relative $\alpha$-Fisher information $\relafisheralpha{p}{\statdist}{\alpha}$ generalizes the relative Fisher information  $\relafisher{p}{\statdist}$ in \eqref{eqn::relative_Fisher}.

\subsection{Case (\rom{1}): $\alpha = 2$}
\label{subsec::alpha=2}
First, we will show that relative $2$-Fisher information can be bounded below by the \renyi{} divergence with order $2$. The quantum analog of the following lemma is provided in \cite[Prop. 4.3]{nextpaper}.
\begin{lemma}[Uniform lower bound of relative $2$-Fisher information]
	Suppose that $\renyidivgalpha{p}{\statdist}{2} < \infty$, then 
	\begin{equation}
	\label{eqn::2-fisher-bound}
	\relafisheralpha{p}{\statdist}{2}  \ge 2K \left(1-e^{-\renyidivgalpha{p}{\statdist}{2}}\right).
	\end{equation}
\end{lemma}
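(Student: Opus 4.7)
The plan is to reduce the claimed lower bound to the Poincar\'e inequality \eqref{eqn::poin_ineq} for $\statdist$, which is available because the Bakry-\'Emery condition \eqref{eq:BE} implies LSI with constant $K$, hence also Poincar\'e with constant $K$.

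First I would unpack the definition of $\relafisheralpha{p}{\statdist}{2}$ in terms of $p/\statdist$. Writing $Z := \int (p/\statdist)^2 \ud \statdist = e^{\renyidivgalpha{p}{\statdist}{2}}$, we have $\mu_{2,p}/\statdist = (p/\statdist)^2/Z$, so $\nabla \ln(\mu_{2,p}/\statdist) = 2\nabla \ln(p/\statdist)$. Using $|\nabla \ln(p/\statdist)|^2 (p/\statdist)^2 = |\nabla(p/\statdist)|^2$, this gives the identity
\begin{equation*}
\relafisheralpha{p}{\statdist}{2} \;=\; \frac{1}{2}\relafisher{\mu_{2,p}}{\statdist} \;=\; \frac{2}{Z}\int \bigl|\nabla(p/\statdist)\bigr|^2 \ud \statdist.
\end{equation*}

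Next I would apply the Poincar\'e inequality \eqref{eqn::poin_ineq} to the test function $f = p/\statdist - 1$, which has mean zero with respect to $\statdist$. This yields
\begin{equation*}
K\!\int (p/\statdist - 1)^2 \ud \statdist \;\le\; \int |\nabla(p/\statdist)|^2 \ud \statdist.
\end{equation*}
The left-hand side equals $K(Z-1)$ since $\int (p/\statdist)^2 \ud \statdist = Z$ and $\int (p/\statdist) \ud \statdist = 1$. Combining with the identity above gives
\begin{equation*}
\relafisheralpha{p}{\statdist}{2} \;\ge\; \frac{2K(Z-1)}{Z} \;=\; 2K\bigl(1 - e^{-\renyidivgalpha{p}{\statdist}{2}}\bigr),
\end{equation*}
which is the claim.

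There is no serious obstacle here; the proof is essentially bookkeeping. The only thing to be careful about is that Poincar\'e (rather than LSI) is the correct tool at $\alpha=2$: the $L^2$ nature of the $2$-R\'enyi divergence aligns precisely with the $L^2$ character of \eqref{eqn::poin_ineq}, and the tilted measure $\mu_{2,p}$ conveniently reweights the gradient so that the resulting Fisher-type quantity is a pure $\statdist$-integral of $|\nabla(p/\statdist)|^2$. The factor $1 - e^{-D_2}$ on the right-hand side arises naturally as the ratio $(Z-1)/Z$, which is what one should expect: as $p \to \statdist$ one has $Z \to 1$ and both sides tend to zero at the linearized rate $2K \cdot \renyidivgalpha{p}{\statdist}{2}$, consistent with the $\alpha=1$ (entropy) case.
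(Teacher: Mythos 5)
Your proof is correct and follows essentially the same route as the paper: both reduce the bound to the Poincar\'e inequality applied to (a normalization of) $p/\statdist - 1$, using the identity $\relafisheralpha{p}{\statdist}{2} = \tfrac{2}{Z}\int |\nabla(p/\statdist)|^2 \ud \statdist$ with $Z = e^{\renyidivgalpha{p}{\statdist}{2}}$. The paper merely writes $p = \statdist + \eps f$ with $\eps^2 = \chi^2(p,\statdist) = Z-1$, which is the same computation in slightly different bookkeeping.
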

\begin{proof}
	Let $\eps = \sqrt{\int \frac{(p - \statdist)^2}{\statdist}\ud x}$ and let $f = (p - \statdist)/\eps$. Thus we know that $p = \statdist + \eps f$, $\int \frac{ f^2}{\statdist}\ud x = 1$ and $\int f\ud x = 0$. Then, with some straightforward calculation,
\begin{align*}
\renyidivgalpha{p}{\statdist}{2} &= \ln(1+\eps^2), \\
\relafisheralpha{p}{\statdist}{2} &= \frac{2\eps^2}{1+\eps^2} \int \Abs{\nabla \left(\frac{f}{\statdist}\right)}^2\ud \statdist.
\end{align*}
By \poin{} inequality \eqref{eqn::poin_ineq},
\begin{equation*}
\relafisheralpha{p}{\statdist}{2} \ge \frac{2\eps^2}{1+\eps^2} K \int \left(\frac{f}{\statdist}\right)^2\ud \statdist = \frac{2K\eps^2}{1+\eps^2} = 2K \left(1-e^{-\renyidivgalpha{p}{\statdist}{2}}\right).
\end{equation*}
\end{proof}

\begin{remark}
	In the above proof, $\eps^2 \equiv \int \frac{(p - \statdist)^2}{\statdist}\ud x$ turns out to be the well-known $\chi^2$-divergence $\chi^2(p,\statdist)$. It is straightforward to observe that $\renyidivgalpha{p}{\statdist}{2} = \ln\left(1+\chi^2(p,\statdist)\right)$.
\end{remark}

\begin{proof}[Proof of \thmref{thm::renyi_conv} in {\bf Case (\rom{1})}]
By \eqref{eqn::deri_renyi} and \eqref{eqn::2-fisher-bound}, we immediately have
\begin{align*}
\frac{\ud }{\ud t} \renyidivgalpha{ p_t }{ \statdist }{2} &= -\relafisheralpha{p_t}{\statdist}{2} \\
&\le -2K \left(1-e^{-\renyidivgalpha{p_t}{\statdist}{2}}\right).
\end{align*}
Then,
\begin{align*}
\frac{\ud}{\ud t} \left(\ln \left(e^{\renyidivgalpha{p_t}{\statdist}{2}} - 1\right) \right) \le -2 K.
\end{align*}
After integrating both sides from time $0$ to $t$ and after some straightforward simplification, we have 
\begin{equation}
\label{eqn::renyidivg_2_bound}
\begin{split}
\renyidivgalpha{p_t}{\statdist}{2} &\le \ln\left(1 + (e^{\renyidivgalpha{p_0}{\statdist}{2}} - 1) e^{-2Kt}\right) \le (e^{\renyidivgalpha{p_0}{\statdist}{2}} - 1) e^{-2Kt}\\
&= C_2 \renyidivgalpha{p_0}{\statdist}{2} e^{-2K t}. \\
\end{split}
\end{equation}
where $C_2 = \frac{e^{\renyidivgalpha{p_0}{\statdist}{2}} - 1}{\renyidivgalpha{p_0}{\statdist}{2}}$. Apparently, $\waittime_2 = 0$.
\end{proof}

\subsection{Case  (\rom{2}): $\alpha \in (0,2)$}
We first recall a useful lemma on the monotonicity of \renyi{} divergence with respect to the order $\alpha$.
\begin{lemma}{\cite[Theorem 3]{Erven}}
\label{lem:monotone}
Let $p, q$ be two probability distributions. Then $D_\alpha\infdiv{p}{q}$ is non-decreasing with respect to the order $\alpha\in (0, \infty)$.
\end{lemma}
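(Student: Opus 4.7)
The plan is to deduce the monotonicity of $\alpha \mapsto \renyidivg{p}{q}$ from the log-convexity of the R\'enyi moment
\[
M(\alpha) := \int \Bigl(\frac{p}{q}\Bigr)^{\alpha}\ud q,
\]
which contains essentially all of the analytic content. Setting $f(\alpha) := \ln M(\alpha)$, one has $M(1)=1$ and hence $f(1)=0$, so for every $\alpha \neq 1$
\[
\renyidivg{p}{q} = \frac{f(\alpha)}{\alpha-1} = \frac{f(\alpha)-f(1)}{\alpha-1}.
\]
In other words, $\renyidivg{p}{q}$ is precisely the slope of the chord joining $(1,0)$ to $(\alpha, f(\alpha))$ on the graph of $f$, so monotonicity in $\alpha$ will follow once $f$ is shown to be convex.

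The first step I would carry out is the convexity of $f$ on $(0,\infty)$. Fix $\alpha_0,\alpha_1>0$ and $t\in(0,1)$, and let $\alpha_t=(1-t)\alpha_0+t\alpha_1$. Splitting
\[
\Bigl(\frac{p}{q}\Bigr)^{\alpha_t} = \Bigl(\frac{p}{q}\Bigr)^{(1-t)\alpha_0}\Bigl(\frac{p}{q}\Bigr)^{t\alpha_1}
\]
and applying H\"older's inequality on $(\RR^d,q\ud x)$ with conjugate exponents $1/(1-t)$ and $1/t$ yields $M(\alpha_t) \le M(\alpha_0)^{1-t} M(\alpha_1)^{t}$; taking logarithms gives $f(\alpha_t)\le (1-t)f(\alpha_0)+tf(\alpha_1)$.

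The second and final step is the elementary chord-slope property: for any convex $f$ and any fixed base point $\alpha_\ast$, the map $\alpha\mapsto (f(\alpha)-f(\alpha_\ast))/(\alpha-\alpha_\ast)$ is non-decreasing on each of $(-\infty,\alpha_\ast)$ and $(\alpha_\ast,\infty)$, by the standard three-slope inequality. Applying this with $\alpha_\ast=1$ gives monotonicity of $\renyidivg{p}{q}$ separately on $(0,1)$ and $(1,\infty)$; the continuity of $\alpha \mapsto \renyidivg{p}{q}$ at $\alpha=1$, already noted in the paragraph after the definition of R\'enyi divergence, then patches the two pieces together and produces monotonicity on all of $(0,\infty)$.

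The main obstacle, such as it is, is purely technical: to run H\"older's inequality one needs $M(\alpha_0),M(\alpha_1)<\infty$ (otherwise monotonicity holds trivially once $\renyidivg{p}{q}=\infty$), and the boundary case $\alpha=1$ requires a short L'H\^opital or dominated-convergence argument applied to $f(\alpha)/(\alpha-1)$ as $\alpha\to 1$ to recover $\renyidivgalpha{p}{q}{1}$. Both are routine under the smoothness and decay assumptions standing throughout the paper, and no deeper ingredients should be necessary.
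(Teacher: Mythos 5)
Your argument is correct, but note that the paper does not prove this lemma at all: it is quoted verbatim from van Erven and Harrem\"oes \cite[Theorem 3]{Erven}, so there is no in-paper proof to compare against. What you supply is a clean, self-contained proof via log-convexity of $M(\alpha)=\int (p/q)^{\alpha}\ud q$ (H\"older) plus the three-slope inequality at the base point $\alpha_\ast=1$; this is one of the two standard routes (the cited reference instead writes $\renyidivg{p}{q}=\frac{1}{\alpha-1}\ln\int (p/q)^{\alpha-1}\ud p$ and applies Jensen's inequality to the power function $x\mapsto x^{(\alpha-1)/(\alpha'-1)}$). Two small points where your write-up is slightly glib. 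First, the dismissal of the infinite case as ``trivial'' hides a genuine (if easy) claim: you need that $M(\alpha_0)=\infty$ forces $M(\beta)=\infty$ for all $\beta>\alpha_0>1$, which follows from the pointwise bound $(p/q)^{\beta}\ge (p/q)^{\alpha_0}$ on the set $\{p\ge q\}$ together with $\int_{\{p<q\}}(p/q)^{\alpha_0}\ud q\le 1$; equivalently, one can work with $f$ as an extended-real-valued convex function throughout. Second, the continuity patch at $\alpha=1$ is unnecessary: the three-slope inequality applied to $x<1<y$ directly gives $\bigl(f(x)-f(1)\bigr)/(x-1)\le \bigl(f(y)-f(1)\bigr)/(y-1)$, so the chord slope is non-decreasing across the base point, not merely on each side of it. With those remarks your proof is complete and could stand in place of the citation.
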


\begin{proof}[Proof of \thmref{thm::renyi_conv} in {\bf Case (\rom{2})}] 
 Thanks to Lemma \ref{lem:monotone} and \eqref{eqn::renyidivg_2_bound}, 
 we immediately have, for $\alpha \in (0,2]$ and for all $t\ge 0$, that
 \begin{equation*}
 \begin{aligned}
 D_\alpha \infdiv{p_t}{p_{\infty}}  &\leq  D_2 \infdiv{p_t}{p_{\infty}}
   \leq (e^{\renyidivgalpha{p_0}{\statdist}{2}} - 1) e^{-2Kt} \\
   &= \frac{e^{\renyidivgalpha{p_0}{\statdist}{2}} - 1}{\renyidivg{p_0}{\statdist}} \renyidivg{p_0}{\statdist} e^{-2Kt}\\
 \end{aligned}
\end{equation*}
thus
$\decayfactor_{\alpha} = \frac{e^{\renyidivgalpha{p_0}{\statdist}{2}} - 1}{\renyidivg{p_0}{\statdist}}$. Apparently, waiting period
$\waittime_{\alpha} = 0$.

Recall that we also have the exponential decay of the relative entropy \eqref{eq:entropydecay} due to the LSI. Then by similar argument for $\alpha\le 1$,
\begin{align*}
\renyidivg{p_t}{p_{\infty}}  &\leq  \renyidivgalpha{p_t}{p_{\infty}}{1} \le \renyidivgalpha{p_0}{\statdist}{1} e^{-2K t} \\
&= \frac{\renyidivgalpha{p_0}{\statdist}{1}}{\renyidivg{p_0}{\statdist}}\renyidivg{p_0}{\statdist} e^{-2Kt}.\\
\end{align*}
Thus $\wt{C}_{\alpha} = \frac{\renyidivgalpha{p_0}{\statdist}{1}}{\renyidivg{p_0}{\statdist}}$.
To compare $\wt{C}_\alpha$ and $C_{\alpha}$ when $\alpha\le 1$, notice that 
\begin{align*}
e^{\renyidivgalpha{p_0}{\statdist}{2}} - 1 \ge \renyidivgalpha{p_0}{\statdist}{2} \ge \renyidivgalpha{p_0}{\statdist}{1}.
\end{align*}
Therefore, $C_{\alpha} \ge \wt{C}_{\alpha}$, which suggests that using $\wt{C}_{\alpha}$ provides a better bound for the prefactor when $\alpha \le 1$. Summarizing the above results for the case $\alpha \in (0,1]$ and $\alpha\in (1,2]$ leads into the conclusion in \thmref{thm::renyi_conv} for $\alpha\in (0,2]$.
\end{proof}

\subsection{Case (\rom{3}): $\alpha \in (2, \infty)$} In this case, we would like to prove Theorem \ref{thm::renyi_conv} by utilizing the results for case (\rom{1}) (see \secref{subsec::alpha=2}), and a useful comparison lemma for the family of \renyi{} divergences $\{D_\alpha\infdiv{p_t}{p_\infty}\}_{\alpha > 1}$ when $p_t$ solves the \fp{} equation \eqrefn{eq:fp}.

\begin{lemma}[Comparison lemma]
	\label{lem::renyi_equiv}
	Let $1 < \alpha_0 < \alpha_1 < \infty$.
	If $p_t$ solves the Fokker-Planck equation \eqref{eq:fp} with initial condition $p_0$, then 
	\begin{equation}
      D_{\alpha_1} \infdiv{p_T}{\statdist} \le \frac{\alpha_1(\alpha_0-1)}{\alpha_0(\alpha_1-1)} D_{\alpha_0}\infdiv{p_0}{
        \statdist} \le \renyidivgalpha{p_0}{\statdist}{\alpha_0}, 
	\end{equation}
	where $ T = \frac{1}{2K} \ln\left(\frac{\alpha_1 - 1}{\alpha_0 - 1}\right)$.
	
\end{lemma}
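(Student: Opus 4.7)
The plan is to view the lemma as a hypercontractivity-style estimate for the Fokker-Planck semigroup, in the spirit of Gross's classical argument. Writing $u_t := p_t/\statdist$, a direct computation converts \eqref{eq:fp} into $\partial_t u_t = L u_t$, where $L := \Laplace - \nabla V \cdot \nabla$ is symmetric on $L^2(\statdist)$ with $\int f (Lg) \ud \statdist = -\int \nabla f \cdot \nabla g \ud \statdist$. I will introduce a smooth increasing interpolation $\alpha : [0,T] \to [\alpha_0, \alpha_1]$ (to be fixed shortly) and track the single scalar
$$F(t) := \int u_t^{\alpha(t)} \ud \statdist = \exp\bigl((\alpha(t)-1)\, \renyidivgalpha{p_t}{\statdist}{\alpha(t)}\bigr).$$
Since $\alpha(t)\ge 1$ and $\int u_t \ud \statdist = 1$, Jensen's inequality gives $F(t) \ge 1$, so $h(t) := \ln F(t) \ge 0$.

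Differentiating $F$ and integrating by parts against $\statdist$ yield
$$F'(t) = \alpha'(t)\int u_t^{\alpha(t)}\ln u_t \ud \statdist - \alpha(t)(\alpha(t)-1)\int u_t^{\alpha(t)-2}\Abs{\nabla u_t}^2 \ud \statdist.$$
Next I rewrite the Dirichlet-type term via $u^{\alpha-2}\Abs{\nabla u}^2 = \frac{4}{\alpha^2}\Abs{\nabla u^{\alpha/2}}^2$ and apply the LSI \eqref{eq:lsi} (which holds with constant $K$ under \eqref{eq:BE}) to $g := u_t^{\alpha(t)/2}$, whose squared $L^2(\statdist)$-norm is $F(t)$, obtaining
$$\int \Abs{\nabla u_t^{\alpha/2}}^2 \ud \statdist \ge \frac{K\alpha}{2} \int u_t^{\alpha}\ln u_t \ud \statdist - \frac{K}{2} F\ln F.$$
The two occurrences of $\int u^{\alpha}\ln u \ud \statdist$ then combine with net coefficient $\alpha'(t) - 2K(\alpha(t)-1)$. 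Because this integral has no definite sign, I kill the coefficient by imposing $\alpha'(t) = 2K(\alpha(t)-1)$, whose unique solution with $\alpha(0) = \alpha_0$ is $\alpha(t) = 1 + (\alpha_0 - 1)e^{2Kt}$. This forces $\alpha(T) = \alpha_1$ at exactly $T = \frac{1}{2K}\ln\frac{\alpha_1 - 1}{\alpha_0 - 1}$, matching the lemma's time.

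After the cancellation I am left with $F'(t) \le \frac{2K(\alpha-1)}{\alpha} F \ln F$, equivalently $h'(t) \le \frac{\alpha'(t)}{\alpha(t)} h(t)$. Assuming $h(0)>0$ (otherwise $p_0 \equiv \statdist$ and the claim is trivial), dividing by $h$ and integrating from $0$ to $T$ yields $h(T)/h(0) \le \alpha_1/\alpha_0$, which via $h(t) = (\alpha(t)-1)\renyidivgalpha{p_t}{\statdist}{\alpha(t)}$ translates into $\renyidivgalpha{p_T}{\statdist}{\alpha_1} \le \frac{\alpha_1(\alpha_0-1)}{\alpha_0(\alpha_1-1)} \renyidivgalpha{p_0}{\statdist}{\alpha_0}$. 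The tail inequality $\frac{\alpha_1(\alpha_0-1)}{\alpha_0(\alpha_1-1)} \le 1$ then follows from $\alpha_0<\alpha_1$ by cross-multiplication. The main obstacle I anticipate is identifying this interpolation: the sign-indefinite term $\alpha'(t)\int u^{\alpha}\ln u \ud \statdist$ cannot be estimated directly, and only the specific ODE $\alpha' = 2K(\alpha-1)$ produces an exact cancellation against the matching term extracted from the LSI, which is the mechanism underlying Gross's hypercontractivity theorem.
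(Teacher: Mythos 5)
Your proposal is correct and is essentially the paper's own argument: both are the Gross hypercontractivity computation in which the exponent evolves by $\alpha'(t)=2K(\alpha(t)-1)$ so that the sign-indefinite entropy term cancels against the log-Sobolev bound on the Dirichlet form, and your differential inequality $h'(t)\le \frac{\alpha'(t)}{\alpha(t)}h(t)$ is exactly the paper's statement that $\frac{1}{\beta_t}\ln\int h_t^{\beta_t}\ud\statdist$ is non-increasing. The only cosmetic difference is that the paper applies the LSI in the form \eqref{eqn::lsi_variant} with $f=h_t^{\beta_t}$ rather than to $g=u_t^{\alpha/2}$, and handles the possibility $h=0$ via monotonicity of $F_t$ rather than Gr\"onwall, but these are equivalent.
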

 Lemma \ref{lem::renyi_equiv} states that the \renyi{} divergence $\renyidivg{p_t}{p_\infty}$ can be bounded from above by a \renyi{} divergence with a smaller order than $\alpha$ at the expense of marching time $T$. 
 A simpler version of Lemma \ref{lem::renyi_equiv} 
 for the \ou{} process ($V = \frac{|x|^2}{2}+\frac{d}{2}\ln(2\pi)$) was proved in \cite[Theorem 3.2.3]{CIT-064}. 
 Since we are unaware of the proof of this lemma in literature
 for the \fp{} equation with a strictly convex potential $V$, 
 we include a proof at the end of this section for completeness.

\begin{proof}[Proof of Theorem  \ref{thm::renyi_conv} in {\bf Case (III)}] 
For any $\alpha \in (2,\infty)$, let us consider time $t \ge T_2 := \frac{1}{2K}\ln(\alpha-1)$. By \lemref{lem::renyi_equiv} with $\alpha_0 = 2$ and $\alpha_1 = \alpha$, we have
\begin{align*}
\renyidivgalpha{p_t}{\statdist}{\alpha} &\le \renyidivgalpha{p_{t-T_2}}{\statdist}{2} \myle{\eqref{eqn::renyidivg_2_bound}} (e^{\renyidivgalpha{p_0}{\statdist}{2}} - 1) e^{-2K(t-T_2)}\\
&= \frac{(e^{\renyidivgalpha{p_0}{\statdist}{2}} - 1) e^{2KT_2}}{\renyidivg{p_0}{\statdist}} \renyidivg{p_0}{\statdist} e^{-2Kt}.
\end{align*}
Therefore, $C_{\alpha} = \frac{(e^{\renyidivgalpha{p_0}{\statdist}{2}} - 1) e^{2KT_2}}{\renyidivg{p_0}{\statdist}} = (\alpha-1)\frac{e^{\renyidivgalpha{p_0}{\statdist}{2}} - 1 }{\renyidivg{p_0}{\statdist}}$, and the waiting time $\waittime_{\alpha} = T_2 \equiv \frac{1}{2K}\ln(\alpha-1)$.
\end{proof}

\begin{proof}[Proof of Lemma \ref{lem::renyi_equiv}]
	First, we need a variant of the \lsi{} \eqrefn{eq:lsi}. Let $p$ in \eqref{eq:lsi} be $p = \frac{f \statdist}{\int f\ud \statdist}$ where $f$ is a  smooth, strictly positive function with $\int f\ud \statdist<\infty$. Then \eqrefn{eq:lsi} can be re-written as
	\begin{equation}
	\label{eqn::lsi_variant}
	\int f \ln(f)\ud \statdist - \left(\int f\ud \statdist\right) \ln \left(\int f\ud \statdist\right) \le \frac{1}{2K} \int \frac{\Abs{\nabla f}^2}{f}\ud \statdist.
	\end{equation}

	Then, we follow the proof of \cite[Theorem 3.2.3]{CIT-064}. 
	Let $\beta_t = 1 + (\alpha_0 -1) e^{2Kt}$ and define
	\begin{equation*}
	F_t = \ln\left( \int h_t^{\beta_t}\ud \statdist \right)^{\frac{1}{\beta_t}},
	\end{equation*}
	where $h_t := p_t/\statdist$. 
	It should be emphasized that both $\beta_t$ and $h_t$ are changing during the time evolution: the order $\beta_t$ is changing according to the above choice and the distribution $p_t$ is evolving following the Fokker-Planck equation. 
	We shall show that $F_t$ is non-increasing in time.
	In fact,
	\begin{equation*}
	\begin{split}
	\frac{\ud }{\ud t} F_t &=  \frac{1}{\beta_t^2} \left[ \beta_t \frac{ \frac{\ud}{\ud t} \int h_t^{\beta_t}\ud \statdist}{ \int h_t^{\beta_t} \ud \statdist} - \frac{\ud {\beta}_t}{\ud t} \ln \left(\int h_t^{\beta_t} \ud \statdist \right) \right]. \\
	\end{split}
	\end{equation*}
	To simplify the notation, denote $Z_t := \int h_t^{\beta_t}\ud \statdist$. Multiplying both sides of the last equation by $\beta_t^2 Z_t$ and rearranging a few terms
	\begin{equation*}
	\begin{split}
	\beta_t^2 Z_t \frac{\ud }{\ud t}F_t
	&= \frac{\ud {\beta}_t}{\ud t} \int h_t^{\beta_t} \ln (h_t^{\beta_t}) \ud \statdist + \beta_t^2 \int h_t^{\beta_t-1} \partial_t {h}_t \ud \statdist 
	- \frac{\ud {\beta}_t}{\ud t} Z_t  \ln Z_t \\
	&  \myle{\eqref{eqn::lsi_variant}} \frac{1}{2K} \frac{\ud {\beta}_t}{\ud t} \beta_t^2 \int h_t^{\beta_t-2} \Abs{\nabla h_t}^2 \ud \statdist  + \beta_t^2 \int h_t^{\beta_t-1}\left( -\divg(p_t\nabla \phi_t) \right) \ud x\\
	&= \frac{1}{2K} \frac{\ud {\beta}_t}{\ud t} \beta_t^2 \int h_t^{\beta_t-2} \Abs{\nabla h_t}^2 \ud \statdist 
	- \beta_t^2 (\beta_t-1) \int h_t^{\beta_t-2} \Abs{\nabla h_t}^2 \ud \statdist \\
	&= \beta_t^2 \int h_t^{\beta_t-2} \Abs{\nabla h_t}^2 \ud \statdist \left( \frac{1}{2K}\frac{\ud {\beta}_t}{\ud t} - (\beta_t-1)\right) = 0.
	\end{split}
	\end{equation*}
	
	Because $\beta_t > 0$ and $Z_t > 0$, $F_t$ is non-increasing.
	Therefore, $F_t \le F_0$, \ie, 
	\begin{equation}
	\renyidivgalpha{p_t}{\statdist}{\alpha_t} \le \frac{\beta_t}{\beta_t - 1} \frac{\beta_0 - 1}{\beta_0} 
	\renyidivgalpha{p_0}{\statdist}{\alpha_0}.
	\end{equation}
	Then the lemma is proved by choosing time $T$ such that $\beta_{T} = \alpha_1$, whence $T = \frac{1}{2K}\ln\left(\frac{\alpha_1 - 1}{\alpha_0 - 1}\right)$.
\end{proof}

\section*{Acknowledgment}
The work of YC and JL is supported in part by the National Science
Foundation under grant DMS-1454939.

\bibliographystyle{amsplain}
\bibliography{reference.bib}


%

\end{document}